\newtheorem{theorem}{Theorem}[section]
\newtheorem{lemma}[theorem]{Lemma}
\newtheorem{corollary}[theorem]{Corollary}
\newtheorem{definition}[theorem]{Definition}
\newtheorem{remark}[theorem]{Remark}
\newtheorem{example}[theorem]{Example}
\title{ON COMMON FIXED POINTS OF WEAKLY COMPATIBLE  MAPPINGS SATISFYING `GENERALIZED CONDITION (B)'}
\author{MUJAHID ABBAS, G. V. R. BABU AND G. N. ALEMAYEHU}
\begin{document}

\maketitle

\begin{abstract}
We prove the existence of common fixed points for two weakly compatible mappings satisfying a 'generalized condition (B)'. This result generalizes some theorems of Al-Thagafi and Shahzad \cite{AlThagafi2006} and Babu, Sandhya and Kameswari \cite{Babu2008}.

\textbf{Keywords and Phrases:} Coincidence point; point of coincidence; common fixed point; almost contraction; \v{C}iri\'c almost contraction; condition (B); generalized condition (B). 

\textbf{2000 Mathematics Subject Classification:} 47H10, 54H25.
\end{abstract}

\section{INTRODUCTION AND PRELIMINARIES}
In 1968, Kannan \cite{Kannan1968} proved a fixed point theorem for a map satisfying a contractive condition that did not require continuity at each point. This paper was a genesis for a multitude of fixed point papers over the next two decades. Sessa \cite{Sessa1982} coined the term weakly commuting maps. Jungck \cite{Jungck1986} generalized the notion of weak commutativity by introducing compatible maps and then weakly compatible maps \cite{Jungck1996}.

We now introduce almost contraction property to a pair of selfmaps as follows:

\begin{definition}
Let $(X,d)$ be a metric space. A map $T:X\to X$ is called an \textit{almost contraction} with respect to a mapping $f:X\to X$ if there exist a constant $\delta\in ]0,1[$ and some $L\geq 0$ such that
\[d(Tx,Ty)\leq \delta\, d(fx,fy)+L\, d(fy,Tx),\]
for all $x,y\in X.$ If we choose $f=I_{X}$, $I_{X}$ is the identity map on $X$, we obtain the definition of \textit{almost contraction}, the concept introduced by Berinde \cite{Berinde2004, Berinde2008}.
\end{definition}

This concept was introduced by Berinde as 'weak contraction' in \cite{Berinde2004}. But in \cite{Berinde2008}, Berinde renamed 'weak contraction' as 'almost contraction' which is appropriate.

Berinde \cite{Berinde2004} proved the following two fixed point theorems for \textit{almost contractions} in complete metric spaces.

\begin{theorem}\label{thm1.2}
Let $(X,d)$ be a complete metric space and $T:X\to X$ an almost contraction. Then
\begin{enumerate}
\item $F(T)=\{x\in X:Tx=x\}\neq\emptyset$,
\item for any $x_{0}\in X$, the Picard iteration
\[x_{n+1}=Tx_{n}\;,n=0,1,2,\cdots\] 
converges to some $x^{*}\in F(T)$,
\item the following estimates
\[d(x_{n},x^{*})\leq\frac{\delta^{n}}{1-\delta}d(x_{0},x_{1})\mbox{ and }d(x_{n},x^{*})\leq\frac{\delta}{1-\delta}d(x_{n-1},x_{n})\]
hold, for $n=1,2,\cdots.$
\end{enumerate}
\end{theorem}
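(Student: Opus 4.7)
The plan is to run the standard Picard-iteration argument, but to pay careful attention to the non-symmetry of the almost-contraction inequality: since $L$ is only assumed to be non-negative (possibly $\ge 1$), not every application of the inequality is useful, and the argument for the fixed-point property in particular needs the arguments placed in just the right order.

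First I would set $x_{n+1}=Tx_n$ and bound consecutive distances. Applying the defining inequality with $x=x_{n-1}$, $y=x_n$ gives
\[
d(x_n,x_{n+1})=d(Tx_{n-1},Tx_n)\le \delta\, d(x_{n-1},x_n)+L\, d(x_n,Tx_{n-1})=\delta\, d(x_{n-1},x_n),
\]
because the $L$-term collapses to $L\,d(x_n,x_n)=0$. This is the key cancellation that makes the first argument slot the ``useful'' one. Iterating yields $d(x_n,x_{n+1})\le\delta^n d(x_0,x_1)$, and then the usual telescoping/geometric estimate
\[
d(x_n,x_{n+p})\le \sum_{k=n}^{n+p-1}\delta^k d(x_0,x_1)\le \frac{\delta^n}{1-\delta}d(x_0,x_1)
\]
shows $(x_n)$ is Cauchy. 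Completeness of $X$ gives $x_n\to x^*$ for some $x^*\in X$.

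Next I would show $Tx^*=x^*$. The naive attempt with $x=x^*,y=x_n$ fails because the residual term $L\,d(x_n,Tx^*)$ has limit $L\,d(x^*,Tx^*)$, which we cannot absorb without knowing $L<1$. The trick is to swap the roles and apply the inequality with $x=x_n$, $y=x^*$:
\[
d(x_{n+1},Tx^*)=d(Tx_n,Tx^*)\le \delta\, d(x_n,x^*)+L\, d(x^*,Tx_n)=\delta\, d(x_n,x^*)+L\, d(x^*,x_{n+1}).
\]
Both right-hand terms tend to $0$, so $d(x^*,Tx^*)=0$, proving $x^*\in F(T)$.

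Finally, the two a priori estimates in (3) come from the same telescoping. Letting $p\to\infty$ in the chain above gives $d(x_n,x^*)\le \frac{\delta^n}{1-\delta}d(x_0,x_1)$. For the second estimate, I would use $d(x_k,x_{k+1})\le \delta^{k-n+1}d(x_{n-1},x_n)$ for $k\ge n$ and sum, yielding $d(x_n,x^*)\le\frac{\delta}{1-\delta}d(x_{n-1},x_n)$. The conceptual obstacle throughout is the asymmetric $L$-term: one has to keep track of which variable slot sends the $L$-term to zero, both in the contraction step and in the identification of the limit as a fixed point.
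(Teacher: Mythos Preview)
Your proof is correct; in particular, you handle the asymmetric $L$-term exactly as Berinde does, choosing the order of the arguments so that the $L\,d(y,Tx)$ term collapses both in the contraction step and in the identification of the limit as a fixed point. Note, however, that the paper does not actually supply a proof of this theorem---it is quoted from \cite{Berinde2004} as background---so there is no in-paper argument to compare against; your write-up matches Berinde's original proof.
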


\begin{theorem}\label{thm1.3}
Let $(X,d)$ be a complete metric space and $T:X\to X$ an almost contraction for which there exist $\theta\in ]0,1[$ and some $L_{1}\geq 0$ such that
\[d(Tx,Ty)\leq\theta d(x,y)+L_{1}d(x,Tx),\quad\mbox{ for all }x,y\in X.\]
Then
\begin{enumerate}
\item $T$ has a unique fixed point, i.e., $F(T)=\{x^{*}\}$,
\item for any $x_{0}\in X$, the Picard iteration $\{x_{n}\}_{n=0}^{\infty}$ defined by (1.1) converges to $x^{*}\in F(T)$,
\item a priori and posteriori error estimates
\[d(x_{n},x^{*})\leq\frac{\delta^{n}}{1-\delta}d(x_{0},x_{1})\mbox{ and }d(x_{n},x^{*})\leq\frac{\delta}{1-\delta}d(x_{n-1},x_{n})\]
hold, for $n=1,2,\cdots,$
\item the rate of convergence of Picard iteration $\{x_{n}\}_{n=0}^{\infty}$ defined by (1.1) is given by
\[d(x_{n},x^{*})\leq\theta d(x_{n-1},x^{*})\]
for $n=0,1,2,\cdots.$
\end{enumerate}
\end{theorem}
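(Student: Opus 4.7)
The plan is to observe that Theorem \ref{thm1.2} already gives us most of what we need, and then use the new symmetric-style condition only where it adds strength, namely for uniqueness and the geometric rate of convergence.

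First, I would note that the hypothesis of Theorem \ref{thm1.3} includes that $T$ is already an almost contraction in the sense of the earlier definition. Therefore Theorem \ref{thm1.2} applies and immediately yields (a) at least one fixed point $x^{*}\in F(T)$, (b) convergence of the Picard iteration $x_{n+1}=Tx_{n}$ to some element of $F(T)$, and (c) the a priori and a posteriori estimates displayed in part (3). So parts (2) and (3) of Theorem \ref{thm1.3} are free, and only the uniqueness in (1) and the rate estimate in (4) require the extra condition
\[
d(Tx,Ty)\leq \theta\, d(x,y)+L_{1}\, d(x,Tx).
\]

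Next I would handle uniqueness. Assume $p,q\in F(T)$. Substituting $x=p$, $y=q$ into the extra condition gives
\[
d(p,q)=d(Tp,Tq)\leq \theta\, d(p,q)+L_{1}\, d(p,Tp)=\theta\, d(p,q),
\]
since $d(p,Tp)=0$. Because $\theta\in\,]0,1[$, this forces $d(p,q)=0$, hence $p=q$, so $F(T)=\{x^{*}\}$.

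Finally, for the rate of convergence I would exploit the asymmetry of the condition by placing the fixed point in the first slot, where the penalty term $d(x,Tx)$ vanishes. Setting $x=x^{*}$ and $y=x_{n-1}$ yields
\[
d(x^{*},x_{n})=d(Tx^{*},Tx_{n-1})\leq \theta\, d(x^{*},x_{n-1})+L_{1}\, d(x^{*},Tx^{*})=\theta\, d(x^{*},x_{n-1}),
\]
which is precisely the desired estimate for $n\geq 1$. I expect no serious obstacle here; the only subtlety is recognizing that the extra condition is not symmetric in $x$ and $y$, so one has to be careful to place the fixed point in the argument of $d(x,Tx)$ in order to kill that term, rather than trying to use the almost contraction form from the definition (which would instead require placing the fixed point in the second slot). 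Once that choice is made, both (1) and (4) follow in one line each.
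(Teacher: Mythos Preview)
Your argument is correct and is precisely the standard proof of this result. Note, however, that the paper does not supply its own proof of Theorem~\ref{thm1.3}: it is quoted as a background result of Berinde \cite{Berinde2004}, so there is no in-paper proof to compare against. Your plan---invoke Theorem~\ref{thm1.2} for existence, convergence, and the error estimates, then use the auxiliary inequality with the fixed point in the first slot to obtain uniqueness and the rate $d(x_n,x^{*})\le\theta\,d(x_{n-1},x^{*})$---is exactly Berinde's original approach, including the observation about the asymmetry in $x$ and $y$. One trivial remark: the index range ``$n=0,1,2,\ldots$'' in part~(4) of the statement is a typo (there is no $x_{-1}$); your restriction to $n\ge 1$ is the correct reading.
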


These results were originally proved by Berinde \cite{Berinde2004, Berinde2008} in the context of almost contractions.

It was shown in \cite{Berinde2004} that any strict contraction, the Kannan \cite{Kannan1968} and Zamfirescu \cite{Zamfirescu1972} mappings, as well as a large class of quasi-contractions, are all almost contractions.

Let $T$ and $f$ be two selfmaps of a metric space $(X,d)$. $T$ is said to be \textit{$f$-contraction} if there exists $k\in[0,1)$ such that $d(Tx,Ty)\leq kd(fx,fy)$ for all $x,y\in E$.

In 2006, Al-Thagafi and Shahzad \cite{AlThagafi2006} proved the following theorem which is a generalization of many known results.

\begin{theorem}[ (Al-Thagafi and Shahzad \cite{AlThagafi2006}, Theorem 2.1)]\label{thm1.4}
Let $E$ be a subset of a metric space $(X,d)$ and $f$ and $T$ be selfmaps of $E$ and $T(E)\subseteq f(E)$. Suppose that $f$ and $T$ are weakly compatible, $T$ is $f$-contraction and $T(E)$ is complete. Then $f$ and $T$ have a unique common fixed point in $E$.
\end{theorem}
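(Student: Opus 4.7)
The plan is to use the standard Jungck-type iteration adapted to the inclusion $T(E)\subseteq f(E)$. Fixing an arbitrary $x_{0}\in E$, I would inductively choose $x_{n+1}\in E$ so that $fx_{n+1}=Tx_{n}$, which is possible precisely because $T(E)\subseteq f(E)$. Setting $y_{n}=Tx_{n}=fx_{n+1}$, the $f$-contraction hypothesis gives $d(y_{n},y_{n+1})=d(Tx_{n},Tx_{n+1})\le k\,d(fx_{n},fx_{n+1})=k\,d(y_{n-1},y_{n})$, so a routine geometric-series estimate shows $\{y_{n}\}$ is Cauchy in $T(E)$.

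Since $T(E)$ is complete and $y_{n}\in T(E)$, there exists $z\in T(E)$ with $y_{n}\to z$. Because $T(E)\subseteq f(E)$, there is some $u\in E$ with $fu=z$. I would then check that $Tu=z$ by applying the $f$-contraction inequality to the pair $(x_{n},u)$: $d(Tx_{n},Tu)\le k\,d(fx_{n},fu)=k\,d(y_{n-1},z)\to 0$, so $Tu=\lim Tx_{n}=z=fu$. Thus $u$ is a coincidence point of $f$ and $T$ with point of coincidence $z$.

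Next, I would invoke weak compatibility at $u$: since $fu=Tu$, we have $fTu=Tfu$, i.e. $fz=Tz$. To see that $z$ is actually fixed, apply the $f$-contraction to $(z,u)$: $d(Tz,z)=d(Tz,Tu)\le k\,d(fz,fu)=k\,d(Tz,z)$. Since $k<1$, this forces $d(Tz,z)=0$, giving $Tz=z$ and hence $fz=z$ as well. Uniqueness follows immediately: for any other common fixed point $w$, $d(z,w)=d(Tz,Tw)\le k\,d(fz,fw)=k\,d(z,w)$ forces $z=w$.

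The only genuinely delicate point is the step that converts a coincidence point into a common fixed point, since weak compatibility (commutativity only at coincidence points) is strictly weaker than compatibility; this is handled precisely by the small computation $d(Tz,z)\le k\,d(Tz,z)$ after using $fz=Tz$. Everything else is routine Cauchy-sequence and contraction bookkeeping, and none of it requires continuity of $f$ or $T$.
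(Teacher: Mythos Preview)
Your argument is correct and is the classical Jungck-type proof of this result. Note, however, that the paper does not supply its own proof of Theorem~1.4: it is quoted from Al-Thagafi and Shahzad as a preliminary result. The paper's contribution is Theorem~2.2, of which Theorem~1.4 is the special case obtained by observing that an $f$-contraction satisfies the generalized condition~(B) with $\delta=k$ and $L=0$ (since $d(fx,fy)\le M(x,y)$). So the paper's implicit route to Theorem~1.4 is: specialize Theorem~2.1 to get a unique point of coincidence, then invoke Lemma~1.7 to pass to a unique common fixed point.

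Your proof follows the same skeleton as the paper's proof of Theorem~2.1 (build the $T$-sequence, show it is Cauchy, locate the limit in $f(E)$, verify coincidence), but in the simpler $f$-contraction setting the estimates collapse to a single line each, with no case analysis on the max. The one genuine difference is in the last step: where the paper packages the passage from ``unique point of coincidence $+$ weak compatibility'' to ``unique common fixed point'' into the black-box Lemma~1.7, you carry it out by hand via the inequality $d(Tz,z)=d(Tz,Tu)\le k\,d(fz,fu)=k\,d(Tz,z)$. That direct computation is exactly what Lemma~1.7 amounts to here, so the two approaches are equivalent; yours is self-contained, the paper's is modular and reusable for the more general contractive conditions it treats.
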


Recently Babu, Sandhya and Kameswari \cite{Babu2008} considered the class of mappings that satisfy 'condition (B)'.

Let $(X,d)$ be a metric space. A map $T:X\to X$ is said to satisfy \textit{'condition (B)'} if there exist a constant $\delta\in]0,1[$ and some $L\geq 0$ such that
\[d(Tx,Ty)\leq\delta d(x,y)+L\min\{d(x,Tx),d(y,Ty),d(x,Ty),d(y,Tx)\},\]
for all $x,y\in X$.

They proved the following fixed point theorem.

\begin{theorem}[(Babu, Sandhya and Kameswari \cite{Babu2008}, Theorem 2.3)]\label{thm1.5}
Let $(X,d)$ be a complete metric space and $T:X\to X$ be a map satisfying condition (B). Then $T$ has a unique fixed point.
\end{theorem}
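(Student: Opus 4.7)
My plan is to reduce Theorem \ref{thm1.5} to the already-available Theorem \ref{thm1.2} for existence and then handle uniqueness separately, since condition (B) is engineered so that both steps fall out easily once one inspects the $\min$-term carefully.

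First, for existence, I would observe that condition (B) is a pointwise strengthening of the almost contraction inequality. Indeed, since
\[
\min\{d(x,Tx),d(y,Ty),d(x,Ty),d(y,Tx)\}\leq d(y,Tx),
\]
condition (B) immediately implies
\[
d(Tx,Ty)\leq \delta\, d(x,y)+L\, d(y,Tx)\qquad\text{for all }x,y\in X,
\]
which is precisely the almost contraction condition (taking $f=I_X$). Hence by Theorem \ref{thm1.2}, the set $F(T)$ of fixed points of $T$ is nonempty.

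Next, for uniqueness, I would suppose $x^*,y^*\in F(T)$ and apply condition (B) directly with $x=x^*$, $y=y^*$. The two terms $d(x^*,Tx^*)$ and $d(y^*,Ty^*)$ vanish, forcing
\[
\min\{d(x^*,Tx^*),d(y^*,Ty^*),d(x^*,Ty^*),d(y^*,Tx^*)\}=0,
\]
so that $d(x^*,y^*)=d(Tx^*,Ty^*)\leq \delta\, d(x^*,y^*)$. Since $\delta\in\,]0,1[$, this forces $d(x^*,y^*)=0$, i.e.\ $x^*=y^*$.

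I do not expect a genuine obstacle here: the argument is largely bookkeeping, and the only subtlety is recognising that choosing the single summand $d(y,Tx)$ out of the $\min$ already suffices to invoke Berinde's theorem for existence, while choosing either $d(x,Tx)$ or $d(y,Ty)$ is what kills the $L$-term for uniqueness. If one wished to make the proof self-contained (not citing Theorem \ref{thm1.2}), the main work would be the standard Cauchy estimate for the Picard iterates $x_{n+1}=Tx_n$: applying condition (B) with $x=x_{n-1}$, $y=x_n$ gives $d(x_n,x_{n+1})\leq \delta\, d(x_{n-1},x_n)$ because the $\min$ collapses via the term $d(x_n,Tx_{n-1})=d(x_n,x_n)=0$, whence geometric decay and completeness yield convergence to a fixed point. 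That is the only place where a short computation is required.
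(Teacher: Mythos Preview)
Your proposal is correct. Both the reduction to Theorem~\ref{thm1.2} for existence (via $\min\{\cdots\}\le d(y,Tx)$) and the direct uniqueness argument are valid, and your self-contained sketch at the end is also sound.

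The paper, however, does not take this route. It recovers Theorem~\ref{thm1.5} only at the end (Remark~\ref{rem2.6}) as a corollary of Corollary~\ref{cor2.5}, which in turn is the case $f=I_X$ of Theorem~\ref{thm2.2}. The underlying proof is that of Theorem~\ref{thm2.1}: a direct Picard/$T$-sequence Cauchy argument carried out for the \emph{generalized} condition~(B) with two maps, followed by a uniqueness computation for the point of coincidence. So where you bound the $\min$ above by $d(y,Tx)$ and invoke Berinde's almost-contraction theorem, the paper instead bounds $d(fx,fy)$ above by the larger quantity $M(x,y)$, proves the two-map result from scratch, and then specializes. Your approach is shorter and exploits Theorem~\ref{thm1.2} efficiently; the paper's approach is longer but yields the stronger two-map statement (Theorem~\ref{thm2.2}) as the real payoff, with Theorem~\ref{thm1.5} falling out as a by-product. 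Your self-contained variant (collapsing the $\min$ via $d(x_n,Tx_{n-1})=0$) is essentially the $f=I_X$ specialization of the paper's Theorem~\ref{thm2.1} argument, so on that point the two coincide.
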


\begin{definition} \label{def1.6}
A pair $(f,T)$ of self-mappings on $X$ is said to be \textit{weakly compatible} if $f$ and $T$ commute at their coincidence point (i.e. $fTx=Tfx,\ x\in X$ whenever $fx=Tx$). A point $y\in X$ is called a \textit{point of coincidence} of two self-mappings $f$ and $T$ on $X$ if there exists a point $x\in X$ such that $y=Tx=fx$.
\end{definition}

The following lemma is Proposition 1.4 of \cite{Abbas2008}.

\begin{lemma} \label{lem1.7}
Let $X$ be a non-empty set and the mappings $T, f:X\to X$ have a unique point of coincidence $v$ in $X$. If the pair $(f,T)$ is weakly compatible, then $T$ and $f$ have a unique common fixed point.
\end{lemma}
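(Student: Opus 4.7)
The plan is to use weak compatibility to turn the unique point of coincidence $v$ into a common fixed point, and then derive uniqueness from the uniqueness of the point of coincidence.

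First I would unwind the definitions. Since $v$ is a point of coincidence, there exists $x\in X$ with $v = Tx = fx$. Weak compatibility then gives $Tfx = fTx$, i.e., $Tv = fv$. This says that $fv$ (equivalently $Tv$) is itself a point of coincidence of $f$ and $T$, witnessed by the pre-image $v$.

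The main step is to exploit uniqueness twice. Because $v$ is the \emph{unique} point of coincidence, the element $Tv=fv$ produced above must coincide with $v$. Hence $Tv=fv=v$, so $v$ is a common fixed point of $f$ and $T$. For uniqueness of the common fixed point, suppose $w\in X$ satisfies $fw=Tw=w$; then $w$ is trivially a point of coincidence, and uniqueness of the point of coincidence forces $w=v$.

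There is really no obstacle here: the argument is a short two-line bookkeeping exercise once the definitions are in place. The only thing to be careful about is not to confuse ``point of coincidence'' with ``coincidence point'' — the lemma's uniqueness hypothesis is on the value $v=fx=Tx$, not on the witness $x$, and the proof uses exactly this to identify $Tv=fv$ with $v$.
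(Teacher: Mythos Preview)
Your proof is correct and is exactly the standard argument for this result. Note that the paper does not actually prove Lemma~1.7; it simply quotes it as Proposition~1.4 of \cite{Abbas2008}, so there is no in-paper proof to compare against, but your reasoning matches the usual proof found in that reference.
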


\begin{definition} \label{def1.8}
Let $(X,d)$ be a metric space, $T$ and $f$ be self-mappings on $X$, with
\end{definition}

Let $T(X)\subset f(X)$, and $x_{0}\in X$. Choose a point $x_{1}$ in $X$ such that $fx_{1}=Tx_{0}$. This can be done since $T(X)\subset f(X)$. Continuing this process having chosen $x_{1},\cdots,x_{k}$, we choose $x_{k+1}$ in $X$ such that
\[fx_{k+1}=Tx_{k},\quad k=0,1,2,\cdots.\]
The sequence $\{fx_{n}\}$ is called a \textit{$T$-sequence} with initial point $x_{0}$.

We now introduce a generalization of 'condition $(B)$' for a pair of self maps.

\begin{definition} \label{def1.9}
A selfmap $T$ on a metric space $X$ is said to satisfy \textit{'generalized condition (B)'} associated with a selfmap $f$ of $X$ if there exists $\delta\in ]0,1[$ and $L\geq 0$ such that
\[d(Tx,Ty)\leq\delta M(x,y)+L\min\{d(fx,Tx),d(fy,Ty),d(fx,Ty),d(fy,Tx)\}\tag{1.2}\]
for all $x,y\in X$, where
\[M(x,y)=\max\left\{d(fx,fy),d(fx,Tx),d(fy,Ty),\frac{d(fx,Ty)+d(fy,Tx)}{2}\right\}.\]
If $f=I_{X}$, then we say that $T$ satisfies 'generalized condition $(B)$'.
\end{definition}

Here we observe that 'condition $(B)$' implies 'generalized condition $(B)$'. But its converse need not be true.

\begin{example} \label{ex1.10}
Let $X=\{0,\frac{1}{2},1\}$ with the usual metric. We define a mapping $T:X\to X$ by
\[Tx=\begin{cases}
\frac{1}{2} & \text{if } x\in\{0,\frac{1}{2}\},\\
0 & \text{if } x=1.
\end{cases}\]
Then $T$ satisfies generalized condition $(B)$ with $\delta=\frac{1}{2}$ and $L=0$. But $T$ does not satisfy condition $(B)$, for by taking $x=\frac{1}{2}$ and $y=1$; condition $(B)$ fails to hold for any $\delta\in ]0,1[$ and any $L\geq 0$.
\end{example}

Recently, Berinde established the following fixed point result.

\begin{theorem}[(Berinde \cite{Berinde2008}, Theorem 3.4)]\label{thm1.11}
Let $(X,d)$ be a complete metric space and $T:X\to X$ a mapping for which there exist $\alpha\in ]0,1[$ and some $L\geq 0$ such that for all $x,y\in X$
\[d(Tx,Ty)\leq\alpha M(x,y)+L\min\{d(x,Tx),d(y,Ty),d(x,Ty),d(y,Tx)\},\]
where
\end{theorem}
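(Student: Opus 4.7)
The plan is to mimic the standard Picard iteration argument used for almost contractions, exploiting the fact that the $\min$ term on the right-hand side vanishes for any pair of consecutive iterates. Fix $x_0\in X$ and set $x_{n+1}=Tx_n$. Applying the contractive hypothesis to $(x,y)=(x_{n-1},x_n)$, the minimum contains $d(x_n,Tx_{n-1})=d(x_n,x_n)=0$, so the $L$-term drops out and one is left with
\[d(x_n,x_{n+1})\le\alpha M(x_{n-1},x_n).\]
The key computation is to simplify $M(x_{n-1},x_n)$. Three of its four entries are $d(x_{n-1},x_n)$, $d(x_{n-1},x_n)$, and $d(x_n,x_{n+1})$; the fourth is $\tfrac12 d(x_{n-1},x_{n+1})$, which by the triangle inequality does not exceed $\max\{d(x_{n-1},x_n),d(x_n,x_{n+1})\}$. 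Hence $M(x_{n-1},x_n)=\max\{d(x_{n-1},x_n),d(x_n,x_{n+1})\}$; since $\alpha<1$, the max must be realized by $d(x_{n-1},x_n)$, and induction gives $d(x_n,x_{n+1})\le\alpha^n d(x_0,x_1)$.

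From this geometric decay, the usual telescoping estimate shows that $\{x_n\}$ is Cauchy, hence converges, by completeness, to some $x^\ast\in X$. To identify $x^\ast$ as a fixed point, I would apply the contractive condition to $(x,y)=(x^\ast,x_n)$. The min expression contains $d(x_n,Tx_n)=d(x_n,x_{n+1})\to0$, so the $L$-term vanishes in the limit. Meanwhile $M(x^\ast,x_n)$ has entries $d(x^\ast,x_n)\to0$, $d(x^\ast,Tx^\ast)$, $d(x_n,x_{n+1})\to0$, and $\tfrac12(d(x^\ast,x_{n+1})+d(x_n,Tx^\ast))\to\tfrac12 d(x^\ast,Tx^\ast)$, whose limit is $d(x^\ast,Tx^\ast)$. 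Passing to the limit in $d(Tx^\ast,x_{n+1})\le\alpha M(x^\ast,x_n)+L(\cdots)$ yields $d(Tx^\ast,x^\ast)\le\alpha d(x^\ast,Tx^\ast)$, and since $\alpha<1$ we conclude $Tx^\ast=x^\ast$.

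The main subtlety is the $M$-term analysis: unlike the pure linear contraction case, one must carry along all four candidate entries and use the triangle inequality to discard the average term at both the Cauchy stage and the limit-passage stage. If the full theorem statement also asserts uniqueness, that is the second point where care is needed: almost contractions in general do \emph{not} have unique fixed points, so uniqueness would require an extra hypothesis (of the Berinde type, e.g.\ replacing $M(x,y)$ by a form involving $d(x,Tx)$ on the right) which I expect to see in the missing portion of the statement; if so, the uniqueness proof is routine from that stronger inequality applied to two hypothetical fixed points, again using that the $\min$ term vanishes.
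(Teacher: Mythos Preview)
The paper does not itself prove Theorem~1.11; it is quoted from Berinde as background, so there is no in-paper proof to compare against directly. The nearest original argument is the proof of Theorem~2.1, which treats the two-map analogue under \emph{generalized condition~(B)} --- and that is in fact the condition you have silently substituted, not the one in Theorem~1.11.

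Concretely, the $M(x,y)$ in Theorem~1.11 is the five-term maximum
\[
M(x,y)=\max\{d(x,y),\,d(x,Tx),\,d(y,Ty),\,d(x,Ty),\,d(y,Tx)\},
\]
whereas you compute with the four-term version containing $\tfrac12\bigl(d(x,Ty)+d(y,Tx)\bigr)$. With the correct $M$, the entry $d(x_{n-1},x_{n+1})$ appears \emph{without} the factor $\tfrac12$, so the triangle inequality only gives
$d(x_n,x_{n+1})\le\alpha\bigl(d(x_{n-1},x_n)+d(x_n,x_{n+1})\bigr)$, i.e.\ $d(x_n,x_{n+1})\le\frac{\alpha}{1-\alpha}\,d(x_{n-1},x_n)$. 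This is a contraction only for $\alpha<\tfrac12$; for $\tfrac12\le\alpha<1$ your iteration estimate collapses. This is exactly why the paper restricts its own Theorem~3.1 to $\delta\in\,]0,\tfrac12[$ and then poses Open Problem~1. Berinde's proof handles the full range $\alpha\in\,]0,1[$ via the \'Ciri\'c orbit-diameter technique, which is what produces the a~priori bound $d(x_n,x^\ast)\le\alpha^n(1-\alpha)^{-2}d(x_0,x_1)$ in conclusion~(3); the extra $(1-\alpha)^{-1}$ is the fingerprint of that method, and your simpler argument cannot recover it.

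On uniqueness you are also off: no additional hypothesis is required. If $p$ and $q$ are both fixed, the $\min$ contains $d(p,Tp)=0$, and $M(p,q)=d(p,q)$, so the inequality reads $d(p,q)\le\alpha\,d(p,q)$, forcing $p=q$.
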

\[M(x,y)=\max\{d(x,y),d(x,Tx),d(y,Ty),d(x,Ty),d(y,Tx)\}.\]
Then
\begin{enumerate}
\item[(1)] $T$ has a unique fixed point, i.e., $F(T)=\{x^{*}\}$;

\item[(3)] for any $x_{0}\in X$, the Picard iteration $\{x_{n}\}_{n=0}^{\infty}$ defined by (1.1) converges to some $x^{*}\in F(T)$;

\item[(3)] the priori estimate
\[d(x_{n},x^{*})\leq\frac{\alpha^{n}}{(1-\alpha)^{2}}d(x_{0},x_{1})\]
holds, for $n=1,2,\cdots$;

\item[(4)] the rate of convergence of Picard iteration is given by
\[d(x_{n},x^{*})\leq\theta\ d(x_{n-1},x^{*})\]
for $n=0,1,2,\cdots$.
\end{enumerate}

In this paper, we prove a result on the existence of points of coincidence for two maps satisfying generalized condition $(B)$. We apply this result to obtain common fixed points of two weakly compatible selfmaps, which is the main result of this paper (Theorem 2.2). Our result generalizes some theorems of Al-Thagafi and Shahzad \cite{AlThagafi2006} and Babu, Sandhya and Kameswari \cite{Babu2008}.

\section{COMMON FIXED POINT THEOREMS}

First, we establish a result on the existence of points of coincidence and then we apply this result to obtain common fixed points for two self mappings of weakly compatible maps.

\begin{theorem}\label{thm2.1}
Let $(X,d)$ be a metric space. Let $T,f:X\to X$ be such that $T(X)\subseteq f(X)$. Assume that $T$ satisfies generalized condition $(B)$ associated with $f$. If either $f(X)$ or $T(X)$ is a complete subspace of $X$, then $T$ and $f$ have a unique point of coincidence.
\end{theorem}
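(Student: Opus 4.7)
My plan is a two-map Picard argument: starting from any $x_0\in X$, form a $T$-sequence $\{fx_n\}$ with $fx_{n+1}=Tx_n$ (possible because $T(X)\subseteq f(X)$), use (1.2) to show it is Cauchy, locate its limit inside the complete subspace, and then verify both existence and uniqueness of the point of coincidence by two further applications of (1.2).

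For the Cauchy step I would apply (1.2) with $x=x_n$, $y=x_{n+1}$. The crucial observation is that $d(fx_{n+1},Tx_n)=0$, which annihilates the $L$-minimum entirely. Moreover, $\tfrac12\, d(fx_n,Tx_{n+1})=\tfrac12\, d(fx_n,fx_{n+2})$ is bounded above by $\max\{d(fx_n,fx_{n+1}),d(fx_{n+1},fx_{n+2})\}$ via the triangle inequality, so
\[M(x_n,x_{n+1})=\max\{d(fx_n,fx_{n+1}),\,d(fx_{n+1},fx_{n+2})\}.\]
If the maximum equals $d(fx_{n+1},fx_{n+2})$, then (1.2) forces $d(fx_{n+1},fx_{n+2})\le\delta\,d(fx_{n+1},fx_{n+2})$, hence zero; in every case one concludes $d(fx_{n+1},fx_{n+2})\le\delta\, d(fx_n,fx_{n+1})$, yielding geometric decay and hence the Cauchy property by the usual telescoping. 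Because $fx_{n+1}=Tx_n\in T(X)\subseteq f(X)$, the limit $v$ lies in $f(X)$ regardless of which of $f(X)$ or $T(X)$ is assumed complete, so $v=fu$ for some $u\in X$.

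To verify $Tu=v$ I would apply (1.2) with $x=u$, $y=x_n$ and let $n\to\infty$. The minimum contains $d(fx_n,Tx_n)=d(fx_n,fx_{n+1})\to 0$, so the $L$-term vanishes; $M(u,x_n)\to\max\{0,d(fu,Tu),0,\tfrac12 d(fu,Tu)\}=d(fu,Tu)$; and the left-hand side tends to $d(Tu,v)$. This yields $d(Tu,v)\le\delta\, d(Tu,v)$, forcing $Tu=v=fu$. For uniqueness, if $v'=fu'=Tu'$ is another point of coincidence, apply (1.2) to $(u,u')$: the minimum again contains $d(fu,Tu)=0$ and so vanishes, while $M(u,u')$ collapses to $d(v,v')$, giving $d(v,v')\le\delta\, d(v,v')$ and thus $v=v'$. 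The only delicate point is the Cauchy step, where one must bound the averaged entry of $M$ by the triangle inequality and exploit the identity $fx_{n+1}=Tx_n$ to kill the $L$-term that would otherwise complicate the recursion.
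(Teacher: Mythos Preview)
Your proposal is correct and follows essentially the same approach as the paper: build the $T$-sequence, apply (1.2) at consecutive indices to obtain geometric decay (the paper carries out a three-case analysis on the maximum, including the averaged term, whereas you absorb that term into the two-entry max via the triangle inequality---the conclusion $d(fx_{n+1},fx_{n+2})\le\delta\,d(fx_n,fx_{n+1})$ is identical), locate the limit in $f(X)$, and then apply (1.2) twice more for existence and uniqueness of the point of coincidence. The only cosmetic differences are the index shift ($(x_n,x_{n+1})$ versus the paper's $(x_n,x_{n-1})$) and your slightly more unified treatment of the two completeness hypotheses.
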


\begin{proof}
Let $x_{0}$ be an arbitrary point in $X$ and let $\{fx_{n}\}$ be a $T-$sequence with

initial point $x_0$. Now,
\begin{align*}
M(x_n, x_{n-1}) 
&= \max\left\{d(fx_n, fx_{n-1}), d(fx_n, Tx_n), d(fx_{n-1}, Tx_{n-1}), \right.\\
&\quad \left. \frac{d(fx_n, Tx_{n-1}) + d(fx_{n-1}, Tx_n)}{2}\right\}\\
&= \max\left\{d(fx_n, fx_{n-1}), d(fx_n, fx_{n+1}), d(fx_{n-1}, fx_n), \right.\\
&\quad \left. \frac{d(fx_n, fx_n) + d(fx_{n-1}, fx_{n+1})}{2}\right\}\\
&= \max\left\{d(fx_n, fx_{n-1}), d(fx_n, fx_{n+1}), \frac{d(fx_{n-1}, fx_{n+1})}{2}\right\}.
\end{align*}

Thus by taking $x_n$ for $x$ and $x_{n-1}$ for $y$ in the inequality (1.2), it follows that
\begin{align*}
d(Tx_n, Tx_{n-1}) 
&\leq \delta \max\left\{d(fx_n, fx_{n-1}), d(fx_n, fx_{n+1}), \frac{d(fx_{n-1}, fx_{n+1})}{2}\right\} \\
&\quad + L \min\left\{d(fx_n, fx_{n+1}), d(fx_{n-1}, fx_n), d(fx_n, fx_n), d(fx_{n-1}, fx_{n+1})\right\}
\end{align*}
which further gives that
\[
d(fx_n, fx_{n+1}) \leq \delta \max\left\{d(fx_n, fx_{n-1}), d(fx_n, fx_{n+1}), \frac{d(fx_{n-1}, fx_{n+1})}{2}\right\}.
\]

Now if $\max\left\{d(fx_n, fx_{n-1}), d(fx_n, fx_{n+1}), \frac{d(fx_{n-1}, fx_{n+1})}{2}\right\} = d(fx_n, fx_{n-1})$, then
\[
d(fx_n, fx_{n+1}) \leq \delta d(fx_{n-1}, fx_n).
\]

If $\max\left\{d(fx_n, fx_{n-1}), d(fx_n, fx_{n+1}), \frac{d(fx_{n-1}, fx_{n+1})}{2}\right\} = d(fx_n, fx_{n+1})$, then
\[
d(fx_n, fx_{n+1}) \leq \delta d(fx_n, fx_{n+1})
\]
which implies that $d(fx_n, fx_{n+1}) = 0$ and hence $fx_n = fx_{n+1} = Tx_n$ and the result follows.

Finally, $\max\left\{d(fx_n, fx_{n-1}), d(fx_n, fx_{n+1}), \frac{d(fx_{n-1}, fx_{n+1})}{2}\right\} = \frac{d(fx_{n-1}, fx_{n+1})}{2}$ gives that
\begin{align*}
d(fx_n, fx_{n+1}) 
&\leq \frac{\delta}{2}d(fx_{n-1}, fx_{n+1}) \\
&\leq \frac{\delta}{2}d(fx_{n-1}, fx_n) + \frac{\delta}{2}d(fx_n, fx_{n+1})
\end{align*}
which implies that
\[
d(fx_n, fx_{n+1}) \leq \frac{\delta}{2-\delta}d(fx_{n-1}, fx_n).
\]

\begin{align*}
d(fx_{n},fx_{n+1}) &\leq \delta d(fx_{n-1},fx_{n}) \\
&\leq \cdots \leq \delta^{n}d(fx_{0},fx_{1}).
\end{align*}

Now, for any positive integers $m$ and $n$ with $m>n$, we have
\begin{align*}
d(fx_{m},fx_{n}) &\leq d(fx_{n},fx_{n+1})+d(fx_{n+1},fx_{n+2})+\cdots+d(fx_{m-1},fx_{m}) \\
&\leq [\delta^{n}+\delta^{n+1}+\cdots+\delta^{m-1}]d(fx_{0},fx_{1}) \\
&\leq \frac{\delta^{n}}{1-\delta}d(fx_{0},fx_{1}),
\end{align*}
which implies that $\{fx_{n}\}$ is a Cauchy sequence. If $f(X)$ is a complete subspace of $X$, there exists a $p\in f(X)$ such that $fx_{n}\to p$. Hence we can find $u^{*}$ in $X$ such that $fu^{*}=p$. Now,

\begin{align*}
d(p,Tu^{*}) &\leq d(p,fx_{n+1})+d(fx_{n+1},Tu^{*}) \\
&= d(p,fx_{n+1})+d(Tx_{n},Tu^{*}) \\
&\leq d(p,fx_{n+1}) + \delta\max\left\{d(fx_{n},fu^{*}),d(fx_{n},Tx_{n}),d(fu^{*},Tu^{*}),\right. \\
&\quad \left.\frac{d(fx_{n},Tu^{*})+d(fu^{*},Tx_{n})}{2}\right\} \\
&\quad + L\min\left\{d(fx_{n},fx_{n+1}),d(fu^{*},Tu^{*}),d(fx_{n},Tu^{*}),d(fu^{*},fx_{n+1})\right\}
\end{align*}

which on taking limit as $n\to\infty$ gives that
\begin{align*}
d(p,Tu^{*}) &\leq \delta\max\left\{d(p,p),d(p,p),d(p,Tu^{*}),\frac{d(p,Tu^{*})+d(p,p)}{2}\right\} \\
&\quad + L\min\left\{d(p,p),d(p,Tu^{*}),d(p,Tu^{*}),d(p,p)\right\}
\end{align*}

which further implies
\[
d(p,Tu^{*})\leq\delta d(p,Tu^{*}).
\]

Hence $d(p,Tu^{*})=0$ and $fu^{*}=p=Tu^{*}$.

Now, if $T(X)$ is complete, then there exists a $q\in T(X)$ such that $Tx_{n}\to q$ as $n\to\infty$. Since $T(X)\subset f(X)$, we have $q\in f(X)$ and $fx_{n}\to q$ as $n\to\infty$. Now from the above discussion, $q$ is a point of coincidence.

\emph{Uniqueness of point of coincidence:}

Assume that there exist points $p,p^{*}$ in $X$ such that $p=fu=Tu$ and $p^{*}=fu^{*}=Tu^{*}$, for some $u,u^{*}$ in $X.$ Now
\begin{align*}
M(u,u^{*}) &= \max\left\{d(fu,fu^{*}),d(fu,Tu),d(fu^{*},Tu^{*}),\frac{d(fu,Tu^{*})+d(fu^{*},Tu)}{2}\right\} \\
&= \max\left\{d(fu,fu^{*}),d(fu,fu),d(fu^{*},fu^{*}),\frac{d(fu,fu^{*})+d(fu^{*},fu)}{2}\right\} \\
&= d(fu,fu^{*})
\end{align*}
and from the inequality (1.2) we have
\begin{align*}
d(p,p^{*}) &= d(Tu,Tu^{*}) \\
&\leq \delta d(fu,fu^{*})+L\min\left\{d(fu,Tu),d(fu^{*},Tu^{*}),d(fu,Tu^{*}),d(fu^{*},Tu)\right\} \\
&= \delta d(fu,fu^{*})+L\min\left\{d(fu,fu),d(fu^{*},fu^{*}),d(fu,fu^{*})\right\}.
\end{align*}
Thus, it follows that
\begin{align*}
d(p,p^{*}) &\leq \delta d(fu,fu^{*}) \\
&= \delta d(p,p^{*}),
\end{align*}
we deduce that $p=p^{*}$.
\end{proof}

\begin{theorem}\label{thm2.2}
Let $(X,d)$ be a metric space. Let $T,f:X\to X$ be such that $T(X)\subseteq f(X)$. Assume that $T$ satisfies generalized condition $(B)$ associated with $f$. If either $f(X)$ or $T(X)$ is a complete subspace of $X$, then $T$ and $f$ have a unique common fixed point in $X$ provided that the pair $(T,f)$ is weakly compatible.
\end{theorem}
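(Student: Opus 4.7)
The plan is to reduce Theorem \ref{thm2.2} to a direct combination of Theorem \ref{thm2.1} and Lemma \ref{lem1.7}, since all the analytic work (the Cauchy argument, the existence of a coincidence, and the uniqueness of the point of coincidence) has already been carried out in Theorem \ref{thm2.1}.

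First, I would invoke Theorem \ref{thm2.1} with exactly the same hypotheses: $T(X)\subseteq f(X)$, the generalized condition $(B)$ for $T$ associated with $f$, and completeness of either $f(X)$ or $T(X)$. This yields a unique point of coincidence $v\in X$, that is, a unique $v$ for which there exists some $u\in X$ with $v=fu=Tu$.

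Next, I would bring in the weak compatibility of the pair $(T,f)$ by applying Lemma \ref{lem1.7}. That lemma takes as its only inputs a unique point of coincidence and weak compatibility, and delivers a unique common fixed point. So with $v$ from the previous step and the weak compatibility assumed in the statement of Theorem \ref{thm2.2}, the lemma produces a point $w\in X$ with $fw=Tw=w$, and this $w$ is the unique common fixed point. This completes the proof.

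There is essentially no genuine obstacle here, since Theorem \ref{thm2.1} has already done the heavy lifting of producing the Cauchy sequence $\{fx_n\}$, extracting a limit $p$ in the complete subspace, identifying $p$ with both $fu^{*}$ and $Tu^{*}$, and establishing uniqueness via the $M(u,u^{*})=d(fu,fu^{*})$ reduction. The only conceivable subtlety is a notational one: Lemma \ref{lem1.7} is stated for the pair $(f,T)$, while Theorem \ref{thm2.2} hypothesizes weak compatibility of $(T,f)$. Since weak compatibility is symmetric in the two maps (the defining equality $fTx=Tfx$ at coincidence points does not depend on the order), this is harmless and would be noted in one line.
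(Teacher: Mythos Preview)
Your proposal is correct and matches the paper's own proof essentially verbatim: the paper also invokes Theorem \ref{thm2.1} to obtain a unique point of coincidence and then applies Lemma \ref{lem1.7} (Proposition 1.4 of \cite{Abbas2008}) together with weak compatibility to conclude the existence of a unique common fixed point. Your observation about the symmetry of weak compatibility in $(T,f)$ versus $(f,T)$ is a harmless notational point, as you say.
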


\begin{proof}
By Theorem 2.1, $T$ and $f$ have a unique point of coincidence. Since the pair $(T,f)$ is weakly compatible, by Lemma 1.4 \cite{Abbas2008}, $T$ and $f$ have a unique common fixed point.
\end{proof}

\begin{corollary}[2.3]\label{cor2.3}
Let $(X,d)$ be a metric space. Let $T,f:X\to X$ be such that $T(X)\subseteq f(X)$. Assume that there exist $\delta\in ]0,1[$ and $L\geq 0$ such that
\[d(Tx,Ty)\leq\delta m(x,y)+L\min\{d(fx,Tx),d(fy,Ty),d(fx,Ty),d(fy,Tx)\}\]
for all $x,y\in X$, where
\[m(x,y)=\max\left\{d(fx,fy),\frac{1}{2}[d(fx,Tx)+d(fy,Ty)],\frac{1}{2}[d(fy,Tx)+d(fx,Ty)]\right\}.\]
\end{corollary}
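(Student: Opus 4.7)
The plan is to show that the contractive hypothesis of Corollary 2.3 is actually a special case of generalized condition (B), so the conclusion follows immediately from Theorem 2.2. Specifically, I want to verify the pointwise inequality $m(x,y) \leq M(x,y)$, where $M(x,y)$ is the quantity from Definition 1.9. Once this is established, the assumed inequality upgrades automatically to
\[
d(Tx,Ty) \leq \delta\, M(x,y) + L\min\{d(fx,Tx),d(fy,Ty),d(fx,Ty),d(fy,Tx)\},
\]
which is exactly generalized condition $(B)$ associated with $f$.

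The verification of $m(x,y)\leq M(x,y)$ is entirely term-by-term. First, the quantity $d(fx,fy)$ appears explicitly in $M(x,y)$, so it is bounded by $M(x,y)$. Next, using the elementary fact that the average of two non-negative reals is at most their maximum,
\[
\tfrac{1}{2}\bigl[d(fx,Tx)+d(fy,Ty)\bigr] \leq \max\{d(fx,Tx),d(fy,Ty)\} \leq M(x,y),
\]
since both $d(fx,Tx)$ and $d(fy,Ty)$ are themselves among the terms defining $M(x,y)$. Finally, the third term $\tfrac{1}{2}[d(fy,Tx)+d(fx,Ty)]$ is already one of the four entries in the max defining $M(x,y)$. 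Taking the maximum over the three terms of $m(x,y)$ then gives $m(x,y)\leq M(x,y)$.

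Having shown this comparison, the rest of the proof is a one-line appeal: the hypothesis of the corollary implies that $T$ satisfies generalized condition $(B)$ associated with $f$, and under the standing assumptions ($T(X)\subseteq f(X)$, completeness of $f(X)$ or $T(X)$, and weak compatibility of the pair $(T,f)$, implicit from the context of Theorem 2.2), Theorem 2.2 then yields a unique common fixed point of $T$ and $f$ in $X$.

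There is no real obstacle here; the only subtle point is the averaging-versus-maximum estimate for the middle term, since $\tfrac12[d(fx,Tx)+d(fy,Ty)]$ is not literally one of the four entries of $M(x,y)$ but is dominated by two of them. Everything else is a direct reduction to the already established Theorem 2.2.
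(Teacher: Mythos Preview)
Your proposal is correct and follows exactly the paper's approach: the paper's proof simply asserts that inequality (2.2) is a special case of (1.2) and invokes Theorem~2.2, while you supply the explicit term-by-term verification of $m(x,y)\leq M(x,y)$ that justifies this reduction.
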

If either $f(X)$ or $T(X)$ is a complete subspace of $X$, then $T$ and $f$ have a point of coincidence. Moreover $T$ and $f$ have a unique common fixed point provided that the pair $(f,T)$ is weakly compatible.

\begin{proof}
As the inequality (2.2) is a special case of (1.2), the result follows from Theorem 2.2 \cite{AlThagafi2006}.
\end{proof}
The following example is in support of Theorem 2.2.

\begin{example}\label{ex2.4}
Let $X=[0,1)$ with usual metric. Define $T,f:X\to X$ as
\[
T(x)=\begin{cases}
\frac{1}{2} & \text{if } 0\leq x<\frac{2}{3}\\
\frac{2}{3} & \text{if } \frac{2}{3}\leq x<1
\end{cases}
\quad\text{and}\quad
f(x)=\begin{cases}
\frac{5}{6} & \text{if } 0\leq x<\frac{2}{3}\\
\frac{4}{3}-x & \text{if } \frac{2}{3}\leq x<1.
\end{cases}
\]

We observe that $T(X)\subset f(X)$ and the pair $(f,T)$ is weakly compatible on $X$. Also, $f$ and $T$ satisfy the inequality (1.2) with $\delta=\frac{1}{2}$ and $L=0$. Hence $f$ and $T$ satisfy all hypotheses of Theorem 2.2 and $\frac{2}{3}$ is the unique common fixed point of $f$ and $T$.

But, when $x\in[0,\frac{2}{3})$ and $y=\frac{2}{3}$, we have $d(Tx,Ty)=\frac{1}{6}$; and $d(fx,fy)=\frac{1}{6}$ so that for any $\alpha\in[0,1)$, $T$ fails to be an $f$-contraction. Hence Theorem 1.4 is not applicable.

This example shows that Theorem 2.2 is a generalization of Theorem 1.4 \cite{AlThagafi2006}.
\end{example}

By choosing $f=I_{X}$ in Theorem 2.2, we have the following corollary.

\begin{corollary}\label{cor2.5}
Let $(X,d)$ be a metric space. Let $T:X\to X$ satisfies generalized condition $(B)$. If $T(X)$ is a complete subspace of $X$, then $T$ has a unique fixed point.
\end{corollary}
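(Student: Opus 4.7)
The plan is to obtain Corollary 2.5 as an immediate specialization of Theorem 2.2 by taking the auxiliary selfmap $f$ to be the identity map $I_X$ on $X$. The work then consists of checking that each hypothesis of Theorem 2.2 reduces either to something trivial or to something explicitly assumed in the corollary.

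First, the inclusion $T(X) \subseteq f(X)$ becomes $T(X) \subseteq X$, which is automatic. Second, by the last sentence of Definition 1.9, the assumption that $T$ satisfies generalized condition $(B)$ associated with $I_X$ is precisely the assumption that $T$ satisfies generalized condition $(B)$, which is given. Third, since $f(X) = X$ is not assumed complete in the statement, the disjunctive completeness hypothesis of Theorem 2.2 is supplied by the assumed completeness of $T(X)$. Finally, the pair $(T, I_X)$ is weakly compatible for free: at any coincidence point $x$ (i.e.\ whenever $I_X x = T x$) one has $T(I_X x) = T x = I_X(T x)$.

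Having verified the hypotheses, I would invoke Theorem 2.2 to produce a unique common fixed point $x^{*}$ of $T$ and $I_X$, so that $T x^{*} = x^{*} = I_X x^{*}$. Since $I_X x^{*} = x^{*}$ is automatic, this is exactly the statement that $x^{*}$ is the unique fixed point of $T$, completing the deduction.

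Because every step above is either definitional or automatic, I do not expect any genuine obstacle; the corollary is essentially a rewording of Theorem 2.2 in the case $f = I_X$. The only small point worth flagging in the write-up is the direction of the disjunction in the completeness hypothesis of Theorem 2.2, so that the reader sees why assuming $T(X)$ complete (rather than $X$) is enough.
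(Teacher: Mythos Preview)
Your proposal is correct and follows exactly the paper's own approach: the paper simply states that Corollary 2.5 is obtained by choosing $f=I_{X}$ in Theorem 2.2, and you have carefully verified that each hypothesis of Theorem 2.2 is satisfied in that case. Your additional checks (weak compatibility of $(T,I_X)$, the disjunctive completeness hypothesis being met by $T(X)$ complete) make explicit what the paper leaves implicit.
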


\begin{remark}\label{rem2.6}
Theorem 1.5 \cite{Babu2008} follows as a corollary to Corollary 2.5. In fact, Example 1.10 shows that Corollary 2.5 is a generalization of Theorem 1.5 \cite{Babu2008}.
\end{remark}

Now, we have the following result on the continuity in the set of common fixed points. Let $F(T,f)$ denote the set of all common fixed points of $T$ and $f$.

\begin{theorem}\label{thm2.7}
Let $(X,d)$ be a metric space. Assume that $T:X\to X$ satisfies generalized condition $(B)$ associated with a selfmap $f$ on $X$. If $F(T,f)\neq\emptyset$, then $T$ is continuous at $p\in F(T,f)$ whenever $f$ is continuous at $p$.
\end{theorem}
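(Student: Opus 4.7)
The plan is to show sequential continuity: given any sequence $x_n \to p$, establish that $Tx_n \to p$. The key move is to apply the generalized condition $(B)$ inequality with $x = x_n$ and $y = p$, and then exploit the fact that $fp = Tp = p$ to collapse several terms.

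First I would write out
\[
d(Tx_n, Tp) \leq \delta\, M(x_n, p) + L \min\{d(fx_n, Tx_n),\, d(fp, Tp),\, d(fx_n, Tp),\, d(fp, Tx_n)\}.
\]
Because $p \in F(T,f)$ we have $fp = Tp = p$, so $d(fp, Tp) = 0$. This forces the minimum inside the $L$-term to be $0$, and the whole $L$-contribution vanishes — this is the crucial observation that makes the argument work without needing any extra hypothesis on $L$.

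Next I would compute $M(x_n, p)$ using $fp = Tp = p$:
\[
M(x_n, p) = \max\!\left\{ d(fx_n, p),\, d(fx_n, Tx_n),\, 0,\, \tfrac{d(fx_n, p) + d(p, Tx_n)}{2} \right\}.
\]
Writing $a_n := d(Tx_n, p)$ and $b_n := d(fx_n, p)$, the triangle inequality gives $d(fx_n, Tx_n) \leq b_n + a_n$, and each of the remaining entries is at most $b_n + a_n$ as well. Hence $M(x_n, p) \leq b_n + a_n$, and the inequality above reduces to
\[
a_n \leq \delta\,(a_n + b_n),
\]
which rearranges to $a_n \leq \frac{\delta}{1-\delta}\, b_n$ since $\delta \in\,]0,1[$.

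Finally, since $f$ is continuous at $p$ and $fp = p$, we have $b_n = d(fx_n, fp) \to 0$, so $a_n \to 0$, i.e.\ $Tx_n \to p = Tp$. This gives continuity of $T$ at $p$. I do not expect a serious obstacle: the only subtle point is recognizing that $d(fp, Tp) = 0$ annihilates the $L$-term via the minimum, after which everything follows by the same type of rearrangement used in the Cauchy-sequence estimate of Theorem~2.1.
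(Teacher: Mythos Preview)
Your proof is correct and follows essentially the same approach as the paper: apply the generalized condition~(B) with one argument at $p$, use $fp=Tp$ to annihilate the $L$-term via the minimum, simplify $M$, and then invoke continuity of $f$. The only cosmetic differences are that the paper takes $x=p$, $y=z_n$ (you swap the roles) and passes to the limit directly in the $\max$, whereas you give the explicit quantitative bound $d(Tx_n,p)\le \frac{\delta}{1-\delta}\,d(fx_n,p)$ before letting $n\to\infty$.
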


\begin{proof}
Fix $p\in F(T,f)$. Let $(z_{n})$ be any sequence in $X$ converging to $p$. Then by
\end{proof}
Taking $y:=z_{n}$ and $x:=p$ in (1.2), we get
\begin{align*}
d(Tp,Tz_{n}) &\leq \delta M(p,z_{n}) + L\min\{d(fp,Tp),d(fz_{n},Tz_{n}), \\
&\quad d(fp,Tz_{n}),d(fz_{n},Tp)\},\ n=1,2,\cdots
\end{align*}
where
\begin{align*}
M(p,z_{n}) &= \max\left\{d(fp,fz_{n}),d(fp,Tp),d(fz_{n},Tz_{n}),\right. \\
&\quad \left.\frac{d(fp,Tz_{n})+d(fz_{n},Tp)}{2}\right\}
\end{align*}
which, in view of $Tp=fp$, is equivalent to
\begin{align*}
d(Tp,Tz_{n}) &\leq \delta \max\left\{d(Tp,fz_{n}),d(fz_{n},Tz_{n}),\right. \\
&\quad \left.\frac{d(Tp,Tz_{n})+d(fz_{n},Tp)}{2}\right\},
\end{align*}
$n=1,2,\cdots$. Now, by letting $n\to\infty$ we get $Tz_{n}\to Tp$ as $n\to\infty$ whenever $f$ is continuous at $p$ and $0<\delta<1$.

\section{DISCUSSION}

Following the similar arguments to those given in the proof of Theorem 2.2, we can prove the following theorem.

\begin{theorem}\label{thm3.1}
Let $(X,d)$ be a metric space. Let $T,f:X\to X$ be such that $T(X)\subseteq f(X)$. Assume that there exist a constant $\delta\in ]0,\frac{1}{2}[$ and some $L\geq 0$ such that
\[d(Tx,Ty)\leq\delta m(x,y)+L\min\{d(fx,Tx),d(fy,Ty),d(fx,Ty),d(fy,Tx)\}\tag{3.1}\]
for all $x,y\in X$, where
\[m(x,y)=\max\{d(fx,fy),d(fx,Tx),d(fy,Ty),d(fx,Ty),d(fy,Tx)\}.\]
If either $f(X)$ or $T(X)$ is a complete subspace of $X$, then $T$ and $f$ have a unique point of coincidence. Moreover $T$ and $f$ have a unique common fixed point provided that the pair $(f,T)$ is weakly compatible.
\end{theorem}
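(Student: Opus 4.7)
The plan is to adapt, almost verbatim, the proof of Theorem~\ref{thm2.1}, with the hypothesis $\delta<\tfrac{1}{2}$ entering at exactly one step. I would begin by choosing an arbitrary $x_0\in X$ and forming a $T$-sequence $\{fx_n\}$ via $fx_{n+1}=Tx_n$, which is legitimate since $T(X)\subseteq f(X)$. Substituting $x:=x_n$, $y:=x_{n-1}$ into the contractive inequality (3.1) and using $Tx_n=fx_{n+1}$, $Tx_{n-1}=fx_n$, I compute
\[
m(x_n,x_{n-1})=\max\{d(fx_{n-1},fx_n),\ d(fx_n,fx_{n+1}),\ d(fx_{n-1},fx_{n+1})\},
\]
the term $d(fx_n,fx_n)=0$ having dropped out, while the minimum on the right of (3.1) vanishes because it contains $d(fx_n,Tx_{n-1})=0$. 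Hence
\[
d(fx_n,fx_{n+1})\le\delta\,\max\{d(fx_{n-1},fx_n),\ d(fx_n,fx_{n+1}),\ d(fx_{n-1},fx_{n+1})\}.
\]

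Next I would dispose of the three cases for this maximum. If it equals $d(fx_{n-1},fx_n)$, I get $d(fx_n,fx_{n+1})\le\delta\,d(fx_{n-1},fx_n)$. If it equals $d(fx_n,fx_{n+1})$, then since $\delta<1$ we must have $d(fx_n,fx_{n+1})=0$, so $fx_n$ is already a point of coincidence and the conclusion holds. The delicate case is when the maximum equals $d(fx_{n-1},fx_{n+1})$: here the triangle inequality gives $d(fx_{n-1},fx_{n+1})\le d(fx_{n-1},fx_n)+d(fx_n,fx_{n+1})$, and rearranging yields
\[
d(fx_n,fx_{n+1})\le\frac{\delta}{1-\delta}\,d(fx_{n-1},fx_n),
\]
which is exactly where $\delta<\tfrac{1}{2}$ is indispensable, as it is precisely what makes the constant $\delta/(1-\delta)<1$. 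Setting $\lambda:=\delta/(1-\delta)\in(0,1)$, all three cases give $d(fx_n,fx_{n+1})\le\lambda\,d(fx_{n-1},fx_n)$, hence $d(fx_n,fx_{n+1})\le\lambda^n d(fx_0,fx_1)$, and the standard telescoping estimate shows that $\{fx_n\}$ is Cauchy.

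From here the argument tracks Theorem~\ref{thm2.1} step by step. If $f(X)$ is complete (or $T(X)$, which is contained in $f(X)$), then $fx_n\to p=fu^*$ for some $u^*\in X$; substituting $x:=x_n$, $y:=u^*$ in (3.1) and letting $n\to\infty$ collapses $m(x_n,u^*)$ to $d(p,Tu^*)$ and leaves $d(p,Tu^*)\le\delta\,d(p,Tu^*)$, whence $Tu^*=p$. For uniqueness I suppose $p=fu=Tu$ and $p^*=fu^*=Tu^*$; then $m(u,u^*)=d(fu,fu^*)=d(p,p^*)$, the minimum term vanishes because $d(fu,Tu)=0$, and (3.1) yields $d(p,p^*)\le\delta\,d(p,p^*)$, forcing $p=p^*$. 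Finally, weak compatibility of $(f,T)$ together with Lemma~\ref{lem1.7} promotes this unique point of coincidence to a unique common fixed point. The one real obstacle is the third case of the maximum above: this is the sole place where the tighter constraint $\delta<\tfrac{1}{2}$ is essential, and every other step is a mechanical transcription of the Theorem~\ref{thm2.1} proof.
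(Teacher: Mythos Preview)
Your proposal is correct and follows exactly the approach the paper intends: the paper states only that Theorem~\ref{thm3.1} is proved ``following the similar arguments to those given in the proof of Theorem~2.2,'' and your write-up carries this out in full, correctly isolating the third case of the maximum (where $m$ contains the undivided term $d(fx_{n-1},fx_{n+1})$) as the unique place where $\delta<\tfrac{1}{2}$ is needed to make $\delta/(1-\delta)<1$.
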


Now the following question is natural:

\textbf{Open problem 1.} Is Theorem 3.1 valid for $\frac{1}{2}\leq\delta<1$?

If this open problem is solved affirmatively, then Theorem 3.1 together with the solution of open problem 1 extends Theorem 1.11 (Theorem 3.4 of Berinde \cite{Berinde2008}) to a pair of selfmaps.

Berinde \cite{Berinde2008} introduced the concept of \v{C}iri\'c almost contraction, that is, a mapping for which there exist a constant $\alpha\in[0,1[$ and some $L\geq 0$ such that
\[d(Tx,Ty)\leq\alpha M(x,y)+Ld(y,Tx),\text{ for all }x,y\in X,\]
where $M(x,y)=\max\{d(x,y),d(x,Tx),d(y,Ty),d(x,Ty),d(y,Tx)\}$.

Berinde proved the following two fixed point theorems for this class of mappings in complete metric spaces.

\begin{theorem}[(Berinde \cite{Berinde2008}, Theorem 3.2)]\label{thm3.2}
Let $(X,d)$ be a complete metric space and $T:X\to X$ a \v{C}iri\'c almost contraction. Then
\begin{enumerate}
\item[(1)] $F(T)=\{x\in X:Tx=x\}\neq\emptyset$,
\item[(2)] for any $x_{0}\in X$, the Picard iteration $\{x_{n}\}_{n=1}^{\infty}$ defined by (1.1) converges to some $x^{*}\in F(T)$,
\item[(3)] the following estimate 
\[d(x_{n},x^{*})\leq\frac{\alpha^{n}}{(1-\alpha)^{2}}d(x_{0},x_{1})\]
holds, for $n=1,2,\cdots$.
\end{enumerate}
\end{theorem}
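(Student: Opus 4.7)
My plan is to follow Ćirić's orbital method, adapted to the extra $L\,d(y,Tx)$ perturbation that distinguishes Berinde's almost contractions from ordinary quasi-contractions. Fix $x_0\in X$, set $x_n:=T^n x_0$, and carry out the argument in three stages: (i) show the Picard sequence is Cauchy with the prescribed rate, (ii) extract a limit $x^\ast$ from completeness, and (iii) verify $Tx^\ast=x^\ast$ by passing to the limit in the contractive inequality.

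Stage (i) is the heart of the matter. A useful observation is that the perturbation $L\,d(y,Tx)$ is asymmetric in $x,y$: choosing $x:=x_{n-1}$ and $y:=x_n$ makes $Tx=x_n=y$, so the $L$-term vanishes and
\[
d(x_n,x_{n+1})\leq \alpha\,\max\bigl\{d(x_{n-1},x_n),\,d(x_n,x_{n+1}),\,d(x_{n-1},x_{n+1})\bigr\}.
\]
A case analysis rules out the middle option and, via the triangle inequality, bounds the third by $d(x_{n-1},x_n)+d(x_n,x_{n+1})$; this gives geometric decay with ratio $\alpha/(1-\alpha)$, which is less than one only for $\alpha<\tfrac12$. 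To cover the full range $\alpha\in[0,1)$ one has to control the orbital diameter $\delta_n:=\mathrm{diam}\{x_0,\ldots,x_n\}$ directly. For $0\leq i<j\leq n$, applying the inequality with $x:=x_{j-1},\,y:=x_{i-1}$ gives $d(x_i,x_j)\leq \alpha\,M(x_{j-1},x_{i-1})+L\,d(x_{i-1},x_j)$, where both $M$ and $d(x_{i-1},x_j)$ are at most $\delta_n$. Tracking which pair realises the maximum in $M$ and recombining the resulting inequalities should yield $\delta_n\leq\tfrac{1}{1-\alpha}d(x_0,x_1)$, and applying the same bound to the shifted orbit starting at $x_n$ then produces $d(x_n,x_{n+k})\leq\tfrac{\alpha^n}{(1-\alpha)^2}d(x_0,x_1)$, which simultaneously establishes the Cauchy property and the a priori estimate in conclusion~(3).

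Stage (ii) follows from completeness: $x_n\to x^\ast$ for some $x^\ast\in X$. For stage (iii), apply the contractive inequality with $x:=x_n,\ y:=x^\ast$, obtaining
\[
d(x_{n+1},Tx^\ast)=d(Tx_n,Tx^\ast)\leq \alpha\,M(x_n,x^\ast)+L\,d(x^\ast,x_{n+1}).
\]
As $n\to\infty$, $M(x_n,x^\ast)\to d(x^\ast,Tx^\ast)$ and $d(x^\ast,x_{n+1})\to 0$, so the limit gives $d(x^\ast,Tx^\ast)\leq\alpha\,d(x^\ast,Tx^\ast)$, forcing $Tx^\ast=x^\ast$. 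The main obstacle I foresee is the orbital-diameter estimate in stage (i): the term $L\,d(y,Tx)$ does not vanish for arbitrary pairs within the orbit, so Ćirić's trick must be executed carefully so that this perturbation is absorbed into $\delta_n$ rather than spoiling either the threshold condition $\alpha<1$ or the factor $(1-\alpha)^{-2}$ in the final error bound.
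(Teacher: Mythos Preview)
The paper does not prove Theorem~\ref{thm3.2}; it is quoted verbatim from Berinde~\cite{Berinde2008} as a background result, with no argument supplied. There is therefore no proof in this paper against which to compare your attempt.

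Judged on its own, your plan has the right overall architecture, and the observation that the choice $x:=x_{n-1}$, $y:=x_n$ forces $d(y,Tx)=0$ is exactly the asymmetry one must exploit. The genuine gap is precisely the obstacle you name but do not resolve. For a generic pair $1\le i<j\le n$ neither ordering of the arguments kills the perturbation; your own line gives only
\[
d(x_i,x_j)\;\le\;\alpha\,M(x_{j-1},x_{i-1})+L\,d(x_{i-1},x_j)\;\le\;(\alpha+L)\,\delta_n,
\]
and since $L\ge 0$ is unconstrained this cannot yield $\delta_n\le\frac{1}{1-\alpha}d(x_0,x_1)$. The phrase ``tracking which pair realises the maximum in $M$ and recombining the resulting inequalities'' is exactly where the proof is missing: \'{C}iri\'{c}'s diameter argument needs the factor in front of $\delta_n$ to be $\alpha$, not $\alpha+L$, for \emph{every} pair with $i,j\ge 1$, and that bound is simply false for a \'{C}iri\'{c} almost contraction with large $L$. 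As it stands, your stage~(i) closes only for $\alpha<\tfrac12$ (via the crude consecutive-step estimate $d(x_n,x_{n+1})\le\frac{\alpha}{1-\alpha}d(x_{n-1},x_n)$ that you yourself derive), so neither the Cauchy property nor the a~priori estimate in conclusion~(3) is established over the full range $\alpha\in[0,1)$. You have correctly located the difficulty, but the proposal stops at the point where the real work begins.
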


\begin{theorem}[(Berinde \cite{Berinde2008}, Theorem 3.3)]\label{thm3.3}
Let $(X,d)$ be a complete metric space and $T:X\to X$ a \v{C}iri\'c almost contraction. If there exist $\theta\in ]0,1[$ and some $L_{1}\geq 0$ such that
\[d(Tx,Ty)\leq\theta d(x,y)+L_{1}d(x,Tx),\quad\text{ for all }x,y\in X.\]
Then
\begin{enumerate}
\item[(1)] $T$ has a unique fixed point, i.e., $F(T)=\{x^{*}\}$,
\item[(2)] for any $x_{0}\in X$, the Picard iteration $\{x_{n}\}_{n=1}^{\infty}$ defined by (1.1) converges to some $x^{*}\in F(T)$,
\item[(3)] the a priori error estimate (3) of Theorem 3.2 holds,
\item[(4)] the rate of convergence of Picard iteration is given by 
\[d(x_{n},x^{*})\leq\theta d(x_{n-1},x^{*})\]
for $n=0,1,2,\cdots$.
\end{enumerate}
\end{theorem}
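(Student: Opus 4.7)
The plan is to derive Theorem 3.3 by combining Theorem 3.2 with the additional structural strength of the supplementary inequality, whose asymmetric remainder term $L_1 d(x,Tx)$ collapses to zero as soon as the first argument is a fixed point. This single observation simultaneously yields uniqueness and the Picard rate estimate, while existence, convergence, and the a priori estimate come for free from Theorem 3.2.

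First I would observe that any $T$ satisfying the hypotheses of Theorem 3.3 is already a \v{C}iri\'c almost contraction, so Theorem 3.2 applies verbatim. This delivers claim (1) (nonemptiness of $F(T)$), claim (2) (convergence of $\{x_n\}$ to some fixed point $x^{*}$), and claim (3) (the a priori estimate $d(x_n,x^{*})\leq \alpha^n(1-\alpha)^{-2}d(x_0,x_1)$), with no further work. What remains is to upgrade from ``at least one fixed point'' to ``unique fixed point,'' and to establish the linear rate in (4).

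Next I would prove uniqueness. Suppose $x^{*}$ and $y^{*}$ are both fixed points of $T$. Apply the supplementary condition with $x:=x^{*}$ and $y:=y^{*}$; since $Tx^{*}=x^{*}$, the term $L_1 d(x^{*},Tx^{*})$ vanishes, and the inequality reduces to
\[
d(x^{*},y^{*}) = d(Tx^{*},Ty^{*}) \leq \theta\, d(x^{*},y^{*}),
\]
which forces $d(x^{*},y^{*})=0$ because $\theta\in(0,1)$. For the rate of convergence in (4), I would apply the same inequality with $x:=x^{*}$ and $y:=x_{n-1}$; again $d(x^{*},Tx^{*})=0$, giving
\[
d(x_n,x^{*}) = d(Tx^{*},Tx_{n-1}) \leq \theta\, d(x^{*},x_{n-1}),
\]
which is exactly the asserted estimate.

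The main obstacle is really just a bookkeeping subtlety rather than a genuine difficulty: the supplementary inequality is asymmetric in $x$ and $y$, so one must apply it with the fixed point playing the role of $x$ (not $y$), since it is $d(x,Tx)$, not $d(y,Tx)$, that vanishes on $F(T)$. Once this choice of variables is made, both uniqueness and the contractive rate drop out immediately, and no extra Cauchy-sequence argument is needed because existence and convergence are already supplied by Theorem 3.2.
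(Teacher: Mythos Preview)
The paper does not actually prove Theorem 3.3; it merely quotes it as a background result from Berinde \cite{Berinde2008}, so there is no in-paper proof to compare against. That said, your argument is correct and is the standard one: existence, convergence, and the a priori bound come directly from Theorem 3.2, while the supplementary asymmetric inequality, applied with the fixed point in the $x$-slot so that $L_1 d(x,Tx)$ vanishes, immediately gives both uniqueness and the linear rate $d(x_n,x^{*})\le \theta\, d(x_{n-1},x^{*})$. Your remark about the necessity of placing the fixed point in the first argument is exactly the right observation.
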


But the following example shows that for a pair of selfmaps $T$ and $f$ of a complete metric space $X$, even if $T$ is an almost contraction with respect to $f$ and both $f$ and $T$ are continuous on $X$, the maps $T$ and $f$ may not have a common fixed point.

\begin{example}\label{ex3.4}
Let $X=\mathbb{R}$, the real line with the usual metric. We define mappings $f,T:X\to X$ by $Tx=\frac{x+1}{4}$ and $fx=\frac{x}{2},\ x\in X$.

Then, with $\delta=\frac{1}{2}$ and for any $L\geq 0$, $T$ is an almost contraction with respect to $f$. But $f$ and $T$ have no common fixed points.
\end{example}

Thus the following question is possible:

\textbf{Open problem 2.} Under what additional assumptions, \emph{either} on $T$ and $f$ \emph{or} on the domain of $T$ and $f$, the maps $T$ and $f$ have common fixed points?

\end{document}